\newtheorem{theorem}{Theorem}
\newtheorem{example}[theorem]{Example}
\newenvironment{proof}[1][Proof:]{\begin{trivlist}
\item[\hskip \labelsep {\bfseries #1}]}{\end{trivlist}}
\newcommand{\qed}{\nobreak \ifvmode \relax \else
      \ifdim\lastskip<1.5em \hskip-\lastskip
      \hskip1.5em plus0em minus0.5em \fi \nobreak
      \vrule height0.75em width0.5em depth0.25em\fi}
\font\ccc =msbm10 \font\cccc = msbm7
\begin{document}

\title{A note on certain finite-dimensional representations of the braid group}

\author{Valentin Vankov Iliev\\
Section of Algebra,\\ Institute of
Mathematics and Informatics,\\ Bulgarian Academy of Sciences, 1113
Sofia, Bulgaria\\
e-mail: viliev@math.bas.bg}

\maketitle

\begin{abstract}

In this paper the author finds explicitly all finite-dimensional 
irreducible representations of a series of finite permutation groups 
that are homomorphic images of Artin braid group. 

\end{abstract}

\noindent {\bf Key words}: Artin braid group, permutation
representation, finite-dimensional representation

\section{Introducton}

\label{I}

In \cite{[1]} we consider certain permutation representations
of Artin braid group $B_n$ on $n$ strands in
appropriate finite symmetric groups. The corresponding 
permutation groups $B_n(\sigma)$ depend on a permutation $\sigma$ 
and are extensions of the symmetric group $S_n$ on $n$ letters by an
abelian group $A_n(\sigma)$. Let $q$ be the order of a permutation
$\tau$ which is bound up with $\sigma$, see \cite[(1)]{[1]}. If
$q$ is odd, then any one of these extensions splits, and the
groups $A_n(\sigma)$ and $B_n(\sigma)$ depend only on $q$:
$B_n(q)=B_n(\sigma)$,  $A_n(q)=A_n(\sigma)$. Moreover, in this
case the monodromy action of $S_n$ on $A_n(q)$ is very simple.
This allows a straightforward application of the method of "little
groups" of Wigner and Mackey for finding all finite-dimensional
irreducible representations of $B_n(q)$, which is done in the
present paper. By composing with the surjective homomorphism
$B_n\to B_n(q)$ we obtain a series of finite-dimensional
irreducible representations of $B_n$.

\section{The theorem}

\label{IV}

This paper is a natural continuation of \cite{[1]}, and we use freely the terminology
and notation, introduced there.  Throughout the end we assume that the
order $q$ of the permutation $\tau\in S_d$ from \cite[(1)]{[1]} is
odd. In accord with \cite[Corollary 10]{[1]}, up to an isomorphism
of abstract groups, the groups $A_n(\sigma)$ and $B_n(\sigma)$
depend only on the positive integer $q$ and we adapt the notation:
$B_n(q)=B_n(\sigma)$, and $A_n(q)=A_n(\sigma)$. Now, making use of
\cite[Theorem 9, (iii)]{[1]}, we obtain that $B_n(q)$ is
isomorphic to the semidirect product of the symmetric group $S_n$
by the abelian group
\[
A_n(q)=\underbrace{\hbox{\ccc Z}/(q)\coprod\cdots\coprod
\hbox{\ccc Z}/(q)}_{\hbox{$n$ times}},
\]
and $A_n(q)$ has a structure of $S_n$-module, given by the
monodromy homomorphism $m$ from \cite[Proposition 11]{[1]}. The
group $A_n(q)$ can be
considered as a free $\hbox{\ccc Z}/(q)$-module of rank $n$ with
basis $e_1=(1,0,\ldots, 0)$, $e_2=(0,1,\ldots, 0)$, $\ldots$,
$e_n=(0,0,\ldots, 1)$. We identify $A_n(q)$ with its 
multiplicatively written version $\langle\tau\rangle^{\left(n\right)}$ via the rule $x_1e_1+\cdots+x_ne_n\mapsto \tau^{x_1}\omega^{d}(\tau^{x_2})\ldots\omega^{\left(n-1\right)d}
(\tau^{x_n})$. The involution $m(\theta_s)=\iota_s$, $s=1,\ldots,
n-1$, acts on the basis $(e_i)$ by the rule $\iota_se_s=e_{s+1}$,
and $\iota_se_r=e_r$ for $r\neq s,s+1$. Therefore, if
$\zeta\in\Sigma_n$, we have
\[
\zeta\cdot x=(x_{\zeta^{-1}\left(1\right)},\ldots, x_{\zeta^{-1}\left(n\right)})
\]
for any $x=x_1e_1+\cdots+x_ne_n\in A_n(q)$. The contragredient
action of the symmetric group $\Sigma_n$ on the dual $\hbox{\ccc Z}/(q)$-module
$A_n(q)^*$ is given by the formulae
\[
\zeta\cdot b=(b_{\zeta^{-1}\left(1\right)},\ldots, b_{\zeta^{-1}\left(n\right)})
\]
for any $b=b_1x_1+\cdots+b_nx_n\in A_n(q)^*$.

We denote by $P_n$ the set of all partitions of $n$. Any linear
form $b=(b_1,\hdots, b_n)\in A_n(q)^*$ defines a family of
non-negative number $(\ell_k^{\left(b\right)})_{k\in \hbox{\cccc
Z}/\left(q\right)}$, where  $\ell_k^{\left(b\right)}=|\{i\in
[1,n]\mid b_i=k\}|$. This family, after ordering from largest to
smallest, produces a partition
$\lambda^{\left(b\right)}=(\lambda_1^{\left(b\right)},\lambda_2^{\left(b\right)},\hdots
)\in P_n$. Let $P_{\leq q;n}$ be the set of all partitions
$\lambda\in P_n$ with length not exceeding $q$. The map
$A_n(q)^*\to P_n$, $b\mapsto\lambda^{\left(b\right)}$, is
$\Sigma_n$-equivariant, and let $t\colon\Sigma_n\backslash
A_n(q)^*\to P_n$ be its factorization.

Let $\hbox{\ccc C}$ be the field of complex numbers, and let
$\hbox{\ccc C}^*$ be the multiplicative group of its non-zero
elements. Let us fix a primitive $q$th root of unity
$\varepsilon\in\hbox{\ccc C}^*$. Let $X=Hom(A_n(q),\hbox{\ccc
C}^*)$ be the group of characters of the irreducible
representations of the group $A_n(q)$. The group $X$ consists of
all maps
\[
\chi=\chi_b\colon A_n(q)\to\hbox{\ccc C}^*,\hbox{\ }\chi_b(x)=
\varepsilon^{b\left(x\right)},
\]
where $b\in A_n(q)^*$, and the map $A_n(q)^*\to X$,
$b\mapsto\chi_b$, is a group isomorphism. If $\chi=\chi_b$, the
rule $\zeta\cdot\chi=\chi_{\zeta\cdot b}$ defines an action of the
symmetric group $\Sigma_n$ on the group $X$. We denote by
$\overline{\chi}$ the $\Sigma_n$-orbit of the character $\chi\in
X$. Via \emph{transport de structure}, using $t$, we obtain a map
\begin{equation}
t_X\colon\Sigma_n\backslash X\to P_n,\hbox{\ }
\overline{\chi}\mapsto \lambda^{\left(b\right)}.   \label{IV.1.6}
\end{equation}
The images of the maps $t$ and $t_X$ coincide with $P_{\leq
q;n}\subset P_n$. For any $\Sigma_n$-orbit $\overline{b}\in
t^{-1}(\lambda)$, $\lambda\in P_{\leq q;n}$, we choose a
representative $b$ with stabilizer
$\Sigma_\lambda=\Sigma_{\lambda_1}\times\Sigma_{\lambda_2}\times\hdots$,
and denote the corresponding representative $\chi_b$ of
$\overline{\chi_b}\in t_X^{-1}(\lambda)$ via $\chi_{b,\lambda}$.
Moreover, let us set $D_\lambda=A_n(q)\cdot\Sigma_\lambda$. Let
$\mu_i=(\mu_{i1},\mu_{i2},\hdots)$ be a partition of $\lambda_i$,
$i=1,2,\hdots$, and let $[\mu_i]$ be the corresponding irreducible
representation of the group $\Sigma_{\lambda_i}$. The family
$(\gamma_{\mu_1,\mu_2,\hdots}=[\mu_1]\otimes[\mu_2]\otimes\cdots)$
consists of all irreducible representations of $\Sigma_\lambda$.
Let $\hat{\gamma}_{\mu_1,\mu_2,\hdots}$ be the composition of
$\gamma_{\mu_1,\mu_2,\hdots}$ with the canonical projection
$D_\lambda\to\Sigma_\lambda$. Since each character
$\chi_{b,\lambda}$ has stabilizer $\Sigma_\lambda$, it can be
extended to a one-dimensional character of the group $D_\lambda$,
which we denote by the same letter.

\begin{theorem}\label{IV.1.7} (i) The representations
\[
Ind_{D_\lambda}^{B_n\left(q\right)}(\chi_{b,\lambda}\otimes\hat{\gamma}_{\mu_1,\mu_2,\hdots}),
\]
where $\overline{b}\in t^{-1}(\lambda)$, $\lambda\in P_{\leq
q;n}$, are irreducible, pairwise non-isomorphic, and each
irreducible representation of the group $B_n(q)$ has this form;

(ii) the group $B_n(q)$ has exactly
\[
\sum_{\lambda\in P_n}p(\lambda_1)p(\lambda_2)\hdots
m_\lambda(\underbrace{1,\hdots, 1}_{\hbox{$q$ times}},0,\hdots)
\]
irreducible representations.

\end{theorem}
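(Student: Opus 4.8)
The plan is to invoke the Wigner--Mackey method of \emph{little groups} for the semidirect product $B_n(q)=\Sigma_n\ltimes A_n(q)$, whose normal subgroup $A_n(q)$ is abelian. Recall what that method yields in this situation: $\Sigma_n$ acts on the character group $X=\mathrm{Hom}(A_n(q),\mathbb{C}^*)$; one fixes a set of representatives of the $\Sigma_n$-orbits on $X$, and for such a representative $\chi$ the little group $H_\chi\le\Sigma_n$ is its stabilizer; since $A_n(q)$ is abelian and $H_\chi$ fixes $\chi$, the formula $nh\mapsto\chi(n)$ extends $\chi$ to a one-dimensional character of $A_n(q)\rtimes H_\chi$; and then, for every irreducible representation $\rho$ of $H_\chi$ with inflation $\hat\rho$, the representation $\mathrm{Ind}_{A_n(q)\rtimes H_\chi}^{B_n(q)}(\chi\otimes\hat\rho)$ is irreducible, every irreducible representation of $B_n(q)$ arises this way, and two of them are isomorphic exactly when the orbit representatives agree and the representations $\rho$ are isomorphic. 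Statement (i) has precisely this shape, so the task is to identify the data concretely.

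First I would determine the orbits and the little groups. From the displayed formula for the contragredient action, $\zeta\in\Sigma_n$ fixes a linear form $b$ iff $\zeta$ permutes $[1,n]$ within the level sets $\{i\mid b_i=k\}$, $k\in\mathbb{Z}/(q)$; choosing the representative $b$ of $\overline{b}\in t^{-1}(\lambda)$ so that these level sets are consecutive blocks of sizes $\lambda_1,\lambda_2,\ldots$, its stabilizer is the Young subgroup $\Sigma_\lambda=\Sigma_{\lambda_1}\times\Sigma_{\lambda_2}\times\cdots$, and $A_n(q)\rtimes\Sigma_\lambda=D_\lambda$. Transporting this through the isomorphism $b\mapsto\chi_b$, the little group of $\chi_{b,\lambda}$ is $\Sigma_\lambda$, so $\chi_{b,\lambda}$ extends to $D_\lambda$ as already recorded in the excerpt. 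The irreducible representations of $\Sigma_\lambda=\prod_i\Sigma_{\lambda_i}$ are exactly the external tensor products $\gamma_{\mu_1,\mu_2,\ldots}=[\mu_1]\otimes[\mu_2]\otimes\cdots$ with $\mu_i$ a partition of $\lambda_i$, and their inflations are the $\hat\gamma_{\mu_1,\mu_2,\ldots}$. Since $b$ takes values in $\mathbb{Z}/(q)$, the partition $\lambda^{(b)}$ has at most $q$ parts, so the orbit labels run exactly over $P_{\leq q;n}$; this gives the parametrization in (i), and the Mackey isomorphism criterion becomes the asserted pairwise non-isomorphism.

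For (ii) I would count. A $\Sigma_n$-orbit in $X\cong A_n(q)^*$ is determined by the function $k\mapsto\ell_k^{(b)}$ on $\mathbb{Z}/(q)$, that is, by a weak composition of $n$ into $q$ nonnegative parts; hence the fibre $t^{-1}(\lambda)$ has cardinality equal to the number of distinct sequences of length $q$ obtained by rearranging the parts of $\lambda$ padded with zeros, which is precisely $m_\lambda(\underbrace{1,\ldots,1}_{q},0,\ldots)$, a quantity that vanishes unless $\ell(\lambda)\le q$. Over an orbit labelled by $\lambda$, the little group $\Sigma_\lambda$ has $p(\lambda_1)p(\lambda_2)\cdots$ irreducible representations. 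Summing the product over all orbits, the number of irreducible representations of $B_n(q)$ is $\sum_{\lambda\in P_{\leq q;n}}m_\lambda(\underbrace{1,\ldots,1}_{q},0,\ldots)\,p(\lambda_1)p(\lambda_2)\cdots$, and this equals the sum over all $\lambda\in P_n$ since the monomial-symmetric-function factor annihilates the partitions of length greater than $q$. I expect the main obstacle to be the bookkeeping rather than the representation theory: verifying that the monodromy action turns the stabilizers into the Young subgroups $\Sigma_\lambda$ for the chosen block representatives, and identifying $|t^{-1}(\lambda)|$ with the monomial-symmetric-function evaluation so that the count collapses to the stated closed form. The representation-theoretic core is a black-box application of the Wigner--Mackey theorem.
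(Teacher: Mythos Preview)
Your proposal is correct and follows essentially the same route as the paper: part (i) is a direct application of the Wigner--Mackey ``little groups'' theorem (the paper simply cites \cite[Proposition~25]{[33]}), and part (ii) is the same orbit count, identifying $|t^{-1}(\lambda)|$ with $m_\lambda(1,\ldots,1,0,\ldots)$ and noting this vanishes for $\ell(\lambda)>q$. You have merely spelled out the verifications (that the stabilizers are the Young subgroups $\Sigma_\lambda$ and that the fibre size is the monomial symmetric function evaluation) which the paper leaves implicit.
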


\begin{proof} (i) This is a straightforward use of
\cite[Proposition 25]{[33]}. (ii) The inverse image
$t_X^{-1}(\lambda)$ of any partition $\lambda\in P_{\leq q;n}$
consists of $m_\lambda(\underbrace{1,\hdots, 1}_{\hbox{$q$
times}},0,\hdots)$ elements of the orbit space $\Sigma_n\backslash
A_n(q)^*$, where $m_\lambda(x_1,x_2,\hdots)$ is the monomial
symmetric function, corresponding to $\lambda$.  If we note that
$m_\lambda(1,\hdots, 1,0,\hdots)=0$ when the length of the
partition $\lambda$ exceeds $q$, we obtain that
$|t^{-1}(\lambda)|=m_\lambda(1,\hdots, 1,0,\hdots)$ for any
$\lambda\in P_n$. In order to complete the proof, we use part (i).

\end{proof}

\begin{example}\label{IV.1.8} {\rm
Let us set $d=3$, $n=3$, $\tau=(123)$. Then $q=3$,
$A(3)=(\hbox{\ccc Z}/(3))^3$, $B_3(3)=A(3)\cdot\Sigma_3$.
Moreover, $P_3=\{(3),(2,1),(1^3)\}$, and $P_{\leq 3;3}=P_3$. The
inverse images $t^{-1}(\lambda)$ of the partitions $\lambda\in
P_3$ via the map
\[
t\colon\Sigma_3\backslash A_3(3)^*\to P_3,\hbox{\ }
\overline{b}\mapsto \lambda^{\left(b\right)}
\]
are
\[
t^{-1}((3))=\{\overline{(0,0,0)}, \overline{(1,1,1)},
\overline{(2,2,2)}\},
\]
\[
t^{-1}((2,1))= \{\overline{(0,0,1)}, \overline{(0,0,2)},
\overline{(1,1,0)}, \overline{(1,1,2)}, \overline{(2,2,0)},
\overline{(2,2,1)}\},
\]
and
\[
t^{-1}((1^3))=\{\overline{(0,1,2)}\}.
\]
Further, we obtain
\[
\chi_{\left(0,0,0\right),\left(3\right)}(x)=1,\hbox{\ }
\chi_{\left(1,1,1\right),\left(3\right)}(x)=\varepsilon^{x_1+x_2+x_3},\hbox{\ }
\chi_{\left(2,2,2\right),\left(3\right)}(x)=\varepsilon^{2x_1+2x_2+2x_3},
\]
\[
\chi_{\left(0,0,1\right),\left(2,1\right)}(x)=\varepsilon^{x_3},\hbox{\ }
\chi_{\left(0,0,2\right),\left(2,1\right)}(x)=\varepsilon^{2x_3},\hbox{\ }
\chi_{\left(1,1,0\right),\left(2,1\right)}(x)=\varepsilon^{x_1+x_2},
\]
\[
\chi_{\left(1,1,2\right),\left(2,1\right)}(x)=\varepsilon^{x_1+x_2+2x_3},\hbox{\ }
\chi_{\left(2,2,0\right),\left(2,1\right)}(x)=\varepsilon^{2x_1+2x_2},
\]
\[
\chi_{\left(2,2,1\right),\left(2,1\right)}(x)=\varepsilon^{2x_1+2x_2+x_3},\hbox{\ }
\chi_{\left(0,1,2\right),\left(1^3\right)}(x)=\varepsilon^{x_2+2x_3}.
\]
The irreducible representations of the stabilizer $\Sigma_3$ are
$[(3)]$, $[(1^3)]$, and $[2,1]$ (2-dimensional). Thus, taking into
account that $D_{\left(3\right)}=B_3(3)$, we obtain nine
irreducible representations of the group $B_3(3)$, produced by the
stabilizer $\Sigma_3$ --- six one-dimensional, and three
$2$-dimensional:
\[
\chi_{\left(0,0,0\right),\left(3\right)}\otimes\widehat{\gamma}_{[(3)]}, \hbox{\ }
\chi_{\left(0,0,0\right),\left(3\right)}\otimes\widehat{\gamma}_{[(1^3)]},\hbox{\ }
\chi_{\left(0,0,0\right),\left(3\right)}\otimes\widehat{\gamma}_{[(2,1)]},
\]
\[
\chi_{\left(1,1,1\right),\left(3\right)}\otimes\widehat{\gamma}_{[(3)]},\hbox{\ }
\chi_{\left(1,1,1\right),\left(3\right)}\otimes\widehat{\gamma}_{[(1^3)]},\hbox{\ }
\chi_{\left(1,1,1\right),\left(3\right)}\otimes\widehat{\gamma}_{[(2,1)]},
\]
\[
\chi_{\left(2,2,2\right),\left(3\right)}\otimes\widehat{\gamma}_{[(3)]},\hbox{\ }
\chi_{\left(2,2,2\right),\left(3\right)}\otimes\widehat{\gamma}_{[(1^3)]},\hbox{\ }
\chi_{\left(2,2,2\right),\left(3\right)}\otimes\widehat{\gamma}_{[(2,1)]}.
\]

The stabilizer $\Sigma_2\times\Sigma_1$ has two irreducible
representations: $[(2)]\otimes [(1)]$, and $[(1^2)]\otimes [(1)]$.
Thus, we obtain the following twelve $3$-dimensional irreducible
representations of the group $B_3(3)$:
\[
Ind_{D_{\left(2,1\right)}}^{B_3\left(3\right)}(\chi_{\left(0,0,1\right),\left(2,1\right)}
\otimes\widehat{\gamma}_{[\left(2\right)],[\left(1\right)]}),\hbox{\
}
Ind_{D_{\left(2,1\right)}}^{B_3\left(3\right)}(\chi_{\left(0,0,1\right),\left(2,1\right)}
\widehat{\gamma}_{[\left(1^2\right)],[\left(1\right)]}),
\]
\[
Ind_{D_{\left(2,1\right)}}^{B_3\left(3\right)}(\chi_{\left(0,0,2\right),\left(2,1\right)}
\otimes\widehat{\gamma}_{[\left(2\right)],[\left(1\right)]}),\hbox{\
}
Ind_{D_{\left(2,1\right)}}^{B_3\left(3\right)}(\chi_{\left(0,0,2\right),\left(2,1\right)}
\widehat{\gamma}_{[\left(1^2\right)],[\left(1\right)]}),
\]
\[
Ind_{D_{\left(2,1\right)}}^{B_3\left(3\right)}(\chi_{\left(1,1,0\right),\left(2,1\right)}
\otimes\widehat{\gamma}_{[\left(2\right)],[\left(1\right)]}),\hbox{\
}
Ind_{D_{\left(2,1\right)}}^{B_3\left(3\right)}(\chi_{\left(1,1,0\right),\left(2,1\right)}
\widehat{\gamma}_{[\left(1^2\right)],[\left(1\right)]}),
\]
\[
Ind_{D_{\left(2,1\right)}}^{B_3\left(3\right)}(\chi_{\left(1,1,2\right),\left(2,1\right)}
\otimes\widehat{\gamma}_{[\left(2\right)],[\left(1\right)]}),\hbox{\
}
Ind_{D_{\left(2,1\right)}}^{B_3\left(3\right)}(\chi_{\left(1,1,2\right),\left(2,1\right)}
\widehat{\gamma}_{[\left(1^2\right)],[\left(1\right)]}),
\]
\[
Ind_{D_{\left(2,1\right)}}^{B_3\left(3\right)}(\chi_{\left(2,2,0\right),\left(2,1\right)}
\otimes\widehat{\gamma}_{[\left(2\right)],[\left(1\right)]}),\hbox{\
}
Ind_{D_{\left(2,1\right)}}^{B_3\left(3\right)}(\chi_{\left(2,2,0\right),\left(2,1\right)}
\widehat{\gamma}_{[\left(1^2\right)],[\left(1\right)]}),
\]
\[
Ind_{D_{\left(2,1\right)}}^{B_3\left(3\right)}(\chi_{\left(2,2,1\right),\left(2,1\right)}
\otimes\widehat{\gamma}_{[\left(2\right)],[\left(1\right)]}),\hbox{\
}
Ind_{D_{\left(2,1\right)}}^{B_3\left(3\right)}(\chi_{\left(2,2,1\right),\left(2,1\right)}
\widehat{\gamma}_{[\left(1^2\right)],[\left(1\right)]}).
\]
The trivial stabilizer $\Sigma_1\times\Sigma_1\times\Sigma_1$ has
one irreducible representation --- the unit representation, and,
moreover, $D_{\left(1^3\right)}=A_3(3)$, so we obtain one more
$6$-dimensional irreducible representation of the group $B_3(3)$:
\[
Ind_{A_3\left(3\right)}^{B_3\left(3\right)}(\chi_{\left(0,1,2\right),\left(1^3\right)}).
\]

We note that $6.1^2+3.2^2+12.3^3+1.6^2=3^33!=|B_3(3)|$. }

\end{example}


\begin{thebibliography}{2}


\bibitem{[1]} V.~V. Iliev, On certain permutation representations of the
braid group, arXiv:0910.1727v2 [math.GR].

\bibitem{[33]} J.-P. Serre, \emph{Linear representations of finite groups},
Springer-Verlag 1977.

\end{thebibliography}
\end{document}